\newif\iffurther
\newtheorem{thm}{Theorem}[section] 
\newtheorem{lem}[thm]{Lemma}
\newtheorem{prop}[thm]{Proposition}
\newtheorem{rem}[thm]{Remark}
\newtheorem{ques}[thm]{Question}
\def\[{\left[}
\def\]{\right]}
\def\Ker{{\operatorname{Ker}}}
\def\GKdim{{Gel'fand-Kirillov dimension}}
\long\def\forget#1\forgotten{{}}
\newcommand\isom{{\,\cong\,}}
\def\ra{{\rightarrow}}
\renewcommand\S{{\mathcal{S}}}
\newcommand\Lref[1]{{Lemma~\ref{#1}}}
\def\normali{{\lhd}} 
\begin{document}

\title[]{Affine Algebras with Arbitrary Simple Modules}

\author{Be'eri Greenfeld}
\address{Department of Mathematics, Bar Ilan University, Ramat Gan 5290002, Israel}
\email{beeri.greenfeld@gmail.com}

\thanks{The writer wishes to thank Uzi Vishne and Tomer Bauer.
}

\date{\today}


\begin{abstract}
We construct affine algebras with an arbitrary amount of simple modules of each dimension.
\end{abstract}

\maketitle

\setcounter{tocdepth}{3}

\section{Introduction}

The number of simple modules of any given dimension is a representation-theoretic invariant of an algebra.
Much can be said about the simple modules of algebras satisfying suitable homological finiteness conditions and other restrictions.

Fix an algebraically closed based field $F$. In \cite{Irving}, Irving constructed for any subset $\S \subseteq \{2,3,\dots\}$ a finitely generated $F$-algebra, such that the set of dimensions of simple modules is exactly $\S$, and furthermore it has a unique simple module of each dimension appearing in $\mathcal S$.

Irving's examples are presented in terms of generators and relations. They are thus very concrete, yet do not seem able to be easily modified for arbitrarily many simple modules of each dimension. Taking finite direct sums of Irving's examples, one obtains affine algebras with an arbitrary amount of simple modules of each dimension, provided that these amounts are uniformly bounded for all dimensions.

Our goal in this short note is to introduce a more flexible family of examples, using a different approach. More precisely, given a sequence $(a_n)_{n=2}^{\infty}$ of non-negative integers, we construct a finitely generated $F$-algebra possessing precisely $a_n$ non-isomorphic simple modules of dimension $n$.

\section{Affine algebras with arbitrary simple modules}

Let $F$ be an algebraically closed field. We are forced to make this assumption, because field extensions may occur as simple modules of the algebra. Also, the base field itself plays a role of a simple module, perhaps in infinitely many different ways, so we only deal with modules of dimension greater than one.

\subsection{Construction} \label{construction}

Let $(a_n)_{n=2}^{\infty}$ be a sequence of non-negative integers and $\mathcal{S}$ the support of $(a_n)_{n=2}^{\infty}$, that is, the set of indices $m$ with $a_m\neq 0$. For every $n\in \mathcal{S}$ consider $A_n=\prod_{i=1}^{a_n}M_{n}(F)$, i.e.~$a_n$ copies of $n\times n$ matrix ring over $F$ and let $\rho_i^{(n)}:A_n\rightarrow M_n(F)$ be the projection onto its $i$-th component.

Let $e_n \in A_n$ be the element whose components are matrices with the upper left entry $\lambda_{n,i}$ (where all $\lambda_{n,i} \in F$ are distinct non-zero, and distinct even up to sign for the same $n$) and all other entries zeros.
Let $\sigma_n \in A_n$ be the element whose components are all $$\left( \begin{array}{cc}
0 & I_{n-1} \\
1 & 0 \end{array} \right)$$ where $I_{n-1}$ denotes the identity matrix of order $(n-1)\times (n-1)$.

Finally, let $A=\prod_{n\in \mathcal{S}} A_n$ and let $\pi_n: A \rightarrow A_n$ be the natural projection. Let $e,\sigma \in A$ be the elements whose $n$-th components are $e_n,\sigma_n$ respectively, namely $\pi_n(e) = e_n$ and $\pi_n(\sigma)=\sigma_n$.

Inside $A$, let $B=F\left<e,\sigma\right>$. Note that $F$ is diagonally embedded into $B$, making the latter an $F$-algebra.
For any $n\in \mathcal{S}$ and $1\leq i\leq a_n$ we let $\theta_{n,i}:A \rightarrow M_n(F)$ the composition $\theta_{n,i} = \rho_i^{(n)} \circ \pi_n$, and $\overline{\theta}_{n,i}$ its restriction to $B$.

\subsection{Counting simple modules}

Let $A_0 = \bigoplus_{n\in \mathcal{S}} A_n \subset A$ be the ideal of all elements which are mapped by $\pi_n$ to $0$ for almost all indices $n$. Notice that $A_0$ has $a_n$ simple modules of dimension $n$; however $A_0$ is not affine.

\begin{lem} \label{direct_sum}
We have:
$$A_0 \subset B.$$
\end{lem}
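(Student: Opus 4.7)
The plan is to prove by induction on $n \in \mathcal{S}$, in increasing order, that $A_n \subseteq B$. Since every element of $A_0 = \bigoplus_{n \in \mathcal{S}} A_n$ has finite support, this will imply the claim.

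The computational engine is the element $e\sigma^N e \in B$ for varying $N$. In the $i$-th copy of $A_n$ it equals
$$
\lambda_{n,i}^2 \, E_{11}\,\sigma_n^N\, E_{11} \;=\; \lambda_{n,i}^2 (\sigma_n^N)_{1,1}\, E_{11}.
$$
Since $\sigma_n$ is a cyclic permutation of order exactly $n$, the entry $(\sigma_n^N)_{1,1}$ is $1$ when $n\mid N$ and $0$ otherwise. Writing $E_{11}^{(n,i)}$ for the element of $A$ whose only nonzero entry is $E_{11}$ in the $i$-th copy of $A_n$, we obtain, for every $k\geq 1$,
$$
(e\sigma^N e)^k \;=\; \sum_{\substack{n\in\mathcal{S}\\ n\mid N}}\sum_{i=1}^{a_n} \lambda_{n,i}^{2k}\, E_{11}^{(n,i)} \;\in\; B.
$$

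Now fix $n_0\in\mathcal{S}$ and assume $A_m \subseteq B$ for every $m\in\mathcal{S}$ with $m<n_0$. Taking $N=n_0$ in the identity above, every proper divisor of $n_0$ in $\mathcal{S}$ contributes a piece already lying in $B$ by the inductive hypothesis, so after subtraction we obtain
$$
u_k \;:=\; \sum_{i=1}^{a_{n_0}} \lambda_{n_0,i}^{2k}\, E_{11}^{(n_0,i)} \;\in\; B \qquad (k\geq 1).
$$
The values $\lambda_{n_0,1}^2,\dots,\lambda_{n_0,a_{n_0}}^2$ are pairwise distinct and nonzero --- this is precisely where the hypothesis ``distinct even up to sign for the same $n$'' is used --- so by Vandermonde invertibility there exist scalars $c_1,\dots,c_{a_{n_0}}\in F$ with $\sum_k c_k u_k = E_{11}^{(n_0,i_0)}$ for any prescribed $i_0$. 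Hence $E_{11}^{(n_0,i)}\in B$ for each $i$; left- and right-multiplying by suitable powers of $\sigma$ then leaves the zero components of $E_{11}^{(n_0,i)}$ alone while cycling basis vectors inside the $(n_0,i)$-copy, producing every matrix unit $E_{j,k}^{(n_0,i)}$ and hence all of $A_{n_0}$.

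The main subtlety is the divisor bookkeeping in the induction: the word $e\sigma^{n_0}e$ is ``contaminated'' by contributions from every proper divisor of $n_0$ lying in $\mathcal{S}$, which forces the induction to proceed in the natural order on $\mathcal{S}$ so that these unwanted lower-level contributions have already been absorbed into $B$ before attempting to isolate the $A_{n_0}$ piece. Everything else is linear algebra (Vandermonde) plus the observation that $E_{11}$ together with a cyclic permutation matrix generates $M_n(F)$.
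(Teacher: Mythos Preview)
Your proof is correct and follows essentially the same strategy as the paper: induct on $n\in\mathcal{S}$, use $e\sigma^{n}e$ to produce an element supported only on the $A_m$ with $m\mid n$, strip off the lower pieces via the induction hypothesis, and then separate the $a_n$ components using that the $\lambda_{n,i}^2$ are distinct and nonzero before conjugating by powers of $\sigma$ to obtain all matrix units. The only cosmetic difference is that the paper isolates the $i_0$-th component via the Lagrange-type product $e''\prod_{i\neq i_0}(e''-\lambda_{n,i}^2)$ whereas you take powers $(e\sigma^{n_0}e)^k$ and invoke Vandermonde invertibility---these are equivalent linear-algebra maneuvers.
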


\begin{proof}
First, we claim that it is enough to prove that for all $n\in \mathcal{S}$ and $1\leq i\leq a_n$, $B$ contains an element $x_{n,i}\in A$ such that $\theta_{n,i}(x_{n,i})$ is a matrix whose entries are all zero except for one, and $\theta_{n',i'}(x_{n,i}) = 0$ for any $(n',i') \neq (n,i)$.
Indeed, if such elements exist then $\sum_{i,j=0}^{n} F \sigma^i x_{n,i} \sigma^j $ is the full $i$-th matrix component of $A_n$, and so we obtain that $A_n\subset B$ for all $n$, hence $A_0\subset B$ as claimed.
We now turn to prove the existence of the $x_{n,i}$. This is done by induction.

Let $s_1\in \mathcal{S}$ be the least number in $\mathcal{S}$. Let $e' = e\sigma^{s_1}e$, then $\pi_n(e')=0$ for all $n>s_1$ and $\pi_{s_1}(e') = \pi_{s_1}(e^2)$. Let $1\leq i_0\leq a_{s_1}$ and define $$e'[i_0] = e' \prod_{i \neq i_0} (e' - \lambda_{s_1,i}^2).$$ Then $\theta_{s_1,i}(e'[i_0]) = 0$ for $i\neq i_0$ and $\theta_{s_1,i_0}(e'[i_0])$ has all entries zero except for the upper-left corner which is equal to $$\lambda_{s_1,i_0}^2\prod_{i \neq i_0} (\lambda_{s_1,i_0}^2 - \lambda_{s_1,i}^2)\neq 0.$$

Now for the induction step, assume $n\in \mathcal{S}$ and we already constructed $x_{s,i}$ for all $s\in \mathcal{S}, s<n$ and $1\leq i \leq a_s$.
As in the case of $s_1$ we can define $e' = e\sigma^n e$. We have $\pi_{n'}(e') = 0$ for $n'>n$ and $\pi_n(e') = \pi_n(e^2)$. By induction, there is some $f\in B$ such that $\pi_{n'}(f) = 0$ for all $n \leq n' \in \mathcal{S}$ and $\pi_{n'}(f) = \pi_{n'}(e')$ for all $n > n'\in \mathcal{S}$. Thus $\pi_{n'}(e'' := e' - f) = 0$ for all $n'\neq n$, and $\theta_{n,i}(e'')$ for $1\leq i \leq a_n$ is a matrix with all entries zero except for the upper-left corner. Moreover $\theta_{n,i}\left(e'\prod_{i_0\neq i=1}^{a_n}(e'' - \lambda_{n,i}^2)\right) = 0$ for $i\neq i_0$ and has the desired form (only upper-left entry non-zero) for $i=i_0$.

\end{proof}

Define
$$\overline{B} = F\left<x,y\,|\ xyx = xy^2x = xy^3x = \cdots = 0\right>;$$
there is a projection from $\overline{B}$ to $B/A_0$ defined by $x\mapsto e,\,y\mapsto \sigma$. Indeed, for every $i>0$ we have that $\pi_n(e\sigma^ie) = 0$ for all sufficiently large $n$.
\begin{lem}\label{quotofoverB}
The only finite dimensional prime quotients of $\overline{B}$ have dimension one.
\end{lem}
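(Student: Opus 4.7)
The plan is to use that, over the algebraically closed field $F$, any finite-dimensional prime $F$-algebra is a matrix algebra $M_n(F)$: primeness forces the nilpotent Jacobson radical to vanish, the semisimple quotient can have only one Wedderburn block by primeness, and Schur's lemma over algebraically closed $F$ identifies the block with $M_n(F)$. So it suffices to prove that if $\overline{B}/P \isom M_n(F)$ then $n=1$. Write $\bar x,\bar y$ for the images of $x,y$, and let $V=F^n$ denote the natural simple $M_n(F)$-module.

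Next I would translate the relations $\bar x\bar y^i\bar x=0$ for $i\geq 1$ into a geometric condition on $V$: they say exactly that $\bar y^i(\operatorname{Im}\bar x)\subseteq \Ker\bar x$ for every $i\geq 1$. Setting $W=\operatorname{Im}\bar x$, $K=\Ker\bar x$, and $U=\sum_{i\geq 1}\bar y^i W$, one has $U\subseteq K$ by construction; $U$ is $\bar y$-stable by definition, and $\bar x U\subseteq \bar x K=0$. Since $\bar x,\bar y$ generate the algebra $M_n(F)$, the subspace $U$ is an $M_n(F)$-submodule of the simple module $V$, so either $U=0$ or $U=V$.

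A short case split then finishes things. If $U=V$ then $V\subseteq K$, forcing $\bar x=0$. If $U=0$ then in particular $\bar y W=0$, and together with $\bar x W\subseteq W$ this makes $W$ itself an $M_n(F)$-submodule of $V$; by simplicity $W=0$ (so again $\bar x=0$), or $W=V$, in which case $\bar y V=\bar y W=0$, so $\bar y=0$. In every branch one of the two generators vanishes in $\overline{B}/P$, hence $M_n(F)$ is generated by a single element, is commutative, and $n=1$ follows.

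The main obstacle I expect is recognizing the correct invariant subspace to extract from the relations: the family $\bar x\bar y^i\bar x=0$ only for $i\geq 1$ (not for $i=0$, so $\bar x^2$ is unconstrained) is precisely engineered to produce a $\bar y$-stable subspace annihilated by $\bar x$. Once this is spotted, the simplicity of $V$ together with a single case split on $U$ reduces the problem to the trivial one-generated commutative case.
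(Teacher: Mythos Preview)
Your argument is correct, and it reaches the same endgame as the paper's proof (one of the two generators must vanish in the quotient, forcing a one-variable commutative situation), but the route is genuinely different. The paper never invokes Wedderburn or the simple module $V$; it works purely ring-theoretically inside the prime quotient $B'=\overline{B}/P$: from the relations one checks that $(xy)B'(xy)=0$, so primeness kills $xy$; then, since $B'$ is generated by $x$ and $y$, one deduces $xB'y=0$, and primeness again forces $x\in P$ or $y\in P$. Only at the very last step does the paper use that $F$ is algebraically closed and $\dim_F B'<\infty$ to conclude $B'\cong F$. By contrast, you front-load the structure theory to reduce to $M_n(F)$ and then run a clean invariant-subspace argument on $F^n$. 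Your approach is perhaps more geometric and makes the role of the relations very transparent; the paper's approach is shorter, avoids the Wedderburn reduction, and in fact proves a bit more along the way: \emph{every} prime quotient of $\overline{B}$, finite-dimensional or not, is a quotient of a polynomial ring in one variable.
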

\begin{proof}
Indeed, let $P \normali \overline{B}$, and consider the quotient $B' = \overline{B}/P$. Since $xyB'xy=0$ by the relations, and $B'$ is prime, $xy \in P$; since $B'$ is generated by the images of $x,y$, it follows that $xB'y = 0$; but again since $B'$ is prime, either $x \in P$ or $y \in P$. In each case, $B'$ is a quotient of the polynomial algebra in one variable, but $F$ is algebraically closed, and since we assume $\dim_F B' < \infty$, so $B' \isom F$.
\end{proof}

\begin{prop} \label{counting}
$B$ has precisely $a_n$ non-isomorphic simple modules of dimension $n$ for all $n\in \mathcal{S}$ and no simple modules of dimension $1< n\notin \mathcal{S}$.
\end{prop}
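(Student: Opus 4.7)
The plan is to classify the finite-dimensional simple $B$-modules of dimension greater than one according to how the ideal $A_0\sub B$ (which lies in $B$ by \Lref{direct_sum}) acts on them.

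First I would exhibit the candidate simples: for each $n\in \mathcal{S}$ and $1\leq i\leq a_n$, set $M_{n,i}:=F^n$ with $B$-action through $\overline\theta_{n,i}$. Since the $i$-th factor of $A_n\sub A_0\sub B$ already maps onto $M_n(F)$, the homomorphism $\overline\theta_{n,i}$ is surjective, so $M_{n,i}$ is simple of dimension $n$. The elements $x_{n,i}\in B$ produced inside the proof of \Lref{direct_sum} satisfy $\theta_{n,i}(x_{n,i})\neq 0$ while $\theta_{n,i'}(x_{n,i})=0$ for $i'\neq i$, so the annihilators $\Ker(\overline\theta_{n,i})$ are pairwise distinct for fixed $n$; combined with the dimension discrepancy for varying $n$, this gives pairwise non-isomorphism of the $M_{n,i}$.

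Next I would show every simple $B$-module $M$ with $\dim_F M>1$ is isomorphic to some $M_{n,i}$. Since $A_0\normali B$, simplicity forces either $A_0 M=0$ or $A_0 M=M$. If $A_0 M=0$, then $M$ factors through $B/A_0$, hence through $\overline B$; using that $F$ is algebraically closed, Jacobson density presents $M_{\dim_F M}(F)$ as a finite-dimensional prime quotient of $\overline B$, which by \Lref{quotofoverB} forces $\dim_F M=1$ and contradicts the hypothesis.

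In the remaining case $A_0 M=M$, I would exploit the decomposition $A_0=\bigoplus_{n\in\mathcal S} A_n$ with $A_n A_m=0$ for $n\neq m$. Each $A_n M$ is a $B$-submodule (since $A_n\normali A$ and $B\sub A$), so simplicity leaves a unique $n\in\mathcal S$ with $A_n M=M$. The orthogonal central idempotents $u_{n,i}\in A_n\sub B$ (the identities of the matrix factors of $A_n$) commute with all of $B$ inside $A$ and act as projectors on $M$, so a unique $i$ satisfies $u_{n,i}M=M$; consequently $M\isom F^n$ as an $A_n$-module via $\rho_i^{(n)}$. The main obstacle is verifying that the full $B$-action on $M$ agrees with $\overline\theta_{n,i}$, not merely the $A_n$-action. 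For this I would observe that $bu_{n,i}\in A_n$ for every $b\in B$ (since $u_{n,i}$ vanishes outside the $n$-th slot of $A$), and that $u_{n,i}$ acts as the identity on $M$; hence $b$ and $bu_{n,i}$ act identically on $M$, and the latter acts as $\rho_i^{(n)}(\pi_n(bu_{n,i}))=\theta_{n,i}(b)$ on $F^n\isom M$. This gives $M\isom M_{n,i}$ and completes the count.
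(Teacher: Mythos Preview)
Your argument is correct and follows essentially the same strategy as the paper: both hinge on the dichotomy for the ideal $A_0\normali B$ (your $A_0M=0$ versus $A_0M=M$ is the module-side dual of the paper's $A_0\sub P$ versus $A_0+P=B$), invoking \Lref{quotofoverB} in the first case and reducing to a finite semisimple piece of $A_0$ in the second. The only cosmetic differences are that the paper distinguishes the $M_{n,i}$ via the eigenvalues of $e$ rather than the $x_{n,i}$, and in the nontrivial case it passes through a finite product of the $A_n$ (using $1\in A_0+P$) instead of singling out the component with your central idempotents $u_{n,i}$.
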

\begin{proof}
For every $n$, $B$ acts on an $n$-dimensional vector space making it a simple $B$-module through the maps $\theta_{n,i}$. These modules are non-isomorphic to each other, e.g. since $e$ has different eigenvalues acting on each one of them (the distinct $\lambda_{n,i}$).

To prove that there are no other simple modules of $B$ with finite dimensions greater then one, we consider a primitive ideal $P\triangleleft B$ of finite codimension; $P$ is thus maximal. We have to prove that $P = \Ker(\theta_{n,i})$ for some $n\in \mathcal{S}$ and some $1\leq i\leq a_n$.

Recall that by \Lref{direct_sum}, $A_0=\bigoplus_{n\in \mathcal{S}} A_n\triangleleft B$. Now $(A_0+P)/P\triangleleft B/P$ so either $A_0+P=B$ or $A_0\subseteq P$. In the first case, $1\in A_0+P$ so $P$ contains some element with almost all components the identity matrix, so the natural projection $\phi: B\rightarrow B/P$ splits through the map from $B$ to a direct product of some finitely many $A_n$'s. This map is a surjection by \ref{direct_sum} so $\phi$ is in fact defined on a direct product of finitely many of the $A_n$ components, which is finite dimensional and semisimple so $\phi$ coincides with one of the $\theta_{n,i}$.

In the second case, we have the projections $\overline{B} \ra B/A_0 \ra B/P$, so $B/P$ is a prime finite dimensional quotient of $\overline{B}$, which is ruled out by \Lref{quotofoverB}.
\end{proof}

\begin{rem}
We note that $B$ admits no infinite dimensional simple modules. Indeed, let $L\leq B$ be a maximal left ideal with $P\subseteq L\leq B$ the corresponding primitivie ideal (namely, the sum of the two-sided ideals contained in $L$). As in Proposition \ref{counting}, if $A_0\subseteq L$ then $L$ is $1$-codimensional and otherwise $1\in A_0+L$, so $L$ contains an element $x$ with almost all components being the identity matrix, and we may assume the other components are zero, so $x\in L$ is central, hence $x\in P$ and $B/P$ is finite dimensional.

However, if one consider the subalgebra $B[\sigma^{-1}]\subseteq A$ generated by $B$ and $\sigma^{-1}$ then $A_0\triangleleft B[\sigma^{-1}]$ is primitive with quotient isomorphis to $F\left<x,y,y^{-1}|xy^ix=0\ \forall i\neq 0\right>$, so it admits an infinite dimensional simple module.
\end{rem}

\section{Questions}

We conclude with the following questions. Note that the algebras from \ref{construction} are not prime.

\begin{ques}
Let $(a_n)_{n=2}^{\infty}$ be a sequence of non-negative integers. Does there exist an affine prime algbra with precisely $a_n$ non-isomorphic simple modules of dimension $n$?
\end{ques}

In \cite{Agata_Bell}, Bell and Smoktunowicz construct an affine monomial algebra of \GKdim\ two, having simple modules of arbitrarily large finite dimensions. They also prove that no such algebra exists if one requires further quadratic growth.

We find it thus natural to ask:

\begin{ques}
Is there an affine prime algebra with quadratic growth and simple modules of arbitrarily large (finite) dimensions?
\end{ques}

\end{document}